\newcommand{\F}{\mathcal{F}}
\newcommand{\f}{\mathcal{F}}
\newcommand{\p}{\mathbb{P}}
\newtheorem{theorem}{THEOREM}
\newtheorem{proposition}{PROPOSITION}
\newtheorem{definition}{DEFINITION}
\def\bi{\begin{itemize}}
\def\ei{\end{itemize}}
\begin{document}

\title{Minimal elements of stopping time $\sigma$-algebras}
\author{Tom Fischer\thanks{Institute of Mathematics, University of Wuerzburg, 
Campus Hubland Nord, Emil-Fischer-Strasse 30, 97074 Wuerzburg, Germany.
Tel.: +49 931 3188911.
E-mail: {\tt tom.fischer@uni-wuerzburg.de}.
}\\
{University of Wuerzburg}} 
\date{This version: \today\\First version: December 13, 2011}
\maketitle

\begin{abstract}
We show how minimal elements of a stopping time $\sigma$-algebra can be expressed in terms
of the minimal elements of the $\sigma$-algebra of the underlying filtration.
This facilitates an intuitive interpretation of stopping time $\sigma$-algebras.
An example is provided.
\end{abstract}

\noindent{\bf Key words:} 
Stopped $\sigma$-algebra, stopping time, stopping time $\sigma$-algebra.\\

\noindent{\bf MSC2010:} 60G40, 97K50, 97K60.\\


We denote the time axis by $\mathbb{T}$, where $\mathbb{T}\subset\mathbb{R}$.

\begin{definition}
\label{def1}
Let $\tau$ be a stopping time on a filtered probability space $(\Omega, \f_\infty, (\f_t)_{t\in\mathbb{T}}, \p)$. 
We define the stopping time $\sigma$-algebra w.r.t.~$\tau$ as
\begin{equation}
\label{stsa}
\f_{\tau} = \{F\in\f_{\infty}: F\cap\{\tau\leq t\} \in\f_t \,\text { for all } t\in\mathbb{T}\} .
\end{equation}
\end{definition}
It is well known and straightforward to show that $\f_{\tau}$ indeed is a $\sigma$-algebra.
Note again that $\f_t \subset \f_{\infty}$ is assumed for $t\in\mathbb{T}$.

\begin{definition}
\label{def22}
For a measurable space $(\Omega, \f)$, we define the set of the minimal elements 
in the $\sigma$-algebra $\f$ by
\begin{equation}
\label{atoms}
\mathcal{A}(\f) = \{A\in\f: A\neq \emptyset , \text{ and if } 
F\in\f \text{ and } F\subset A , \text{ then } F=A\} .
\end{equation}
\end{definition}

Eq.~\eqref{atoms} means that $A\in\mathcal{A}(\f)$ can not be `split' in $\f$, which is why
elements of $\mathcal{A}(\f)$ are also referred to as `atoms' of $\mathcal{F}$. 
Obviously, $\mathcal{A}(\f) \subset \f$.
For $|\f|<+\infty$, it is therefore easy to see that $\mathcal{A}(\f)$
is a partition of $\Omega$ and that 
\begin{equation}
\label{generation}
\mathcal{F} = \sigma(\mathcal{A}(\f))
\end{equation}
since any non-empty $F\in\f$ can be written as a finite union of elements in $\mathcal{A}(\f)$.

\begin{definition}
\label{def2}
For $t\in\mathbb{T}\cup\{+\infty\}$, we denote the set of minimal elements in $\f_t$ by
\begin{equation}
\label{atoms1}
\mathcal{A}_t = \mathcal{A}(\f_t) 
= \{A\in\f_t: A\neq \emptyset , \text{ and if } 
F\in\f_t \text{ and } F\subset A , \text{ then } F=A\} .
\end{equation}
Further, we define
\begin{eqnarray}
\label{a1}
\mathcal{A}^{t}_{\tau} & = & 
\{A\in\mathcal{A}_t: A\subset\{\tau=t\}\} \quad (t\in\mathbb{T}\cup\{+\infty\}) ,\\
\label{a3}
\mathcal{A}_{\tau} & = & \bigcup_{t\in\mathbb{T}\cup\{+\infty\}} \mathcal{A}^{t}_{\tau} .
\end{eqnarray}
\end{definition}

Note that the $\mathcal{A}^{t}_{\tau}$ are disjoint for $t\in\mathbb{T}\cup\{+\infty\}$.

\begin{theorem}
\label{prop1}
The elements of $\mathcal{A}_{\tau}$ are minimal elements of $\f_\tau$, 
i.e.~$\mathcal{A}_{\tau} \subset \mathcal{A}(\mathcal{F}_{\tau})$. 
If $|\f_{\infty}|<+\infty$, then $\mathcal{A}_{\tau}$ is the set of all minimal elements of $\f_\tau$,
i.e.~$\mathcal{A}_{\tau} = \mathcal{A}(\mathcal{F}_{\tau})$,
and $\f_\tau = \sigma(\mathcal{A}_{\tau}) = \sigma(\mathcal{A}(\mathcal{F}_{\tau}))$ .
\end{theorem}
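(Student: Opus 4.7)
The plan is to prove the three assertions in order. For the inclusion $\mathcal{A}_\tau \subset \mathcal{A}(\mathcal{F}_\tau)$, I would fix $A \in \mathcal{A}_\tau$, so there is some $t \in \mathbb{T} \cup \{+\infty\}$ with $A \in \mathcal{A}_t$ and $A \subset \{\tau = t\}$. First I check $A \in \mathcal{F}_\tau$ directly from \eqref{stsa}: for $s < t$ the set $A \cap \{\tau \leq s\}$ is empty, while for $s \geq t$ (only relevant when $t \in \mathbb{T}$) it equals $A \in \mathcal{F}_t \subset \mathcal{F}_s$; if $t = +\infty$ the intersection is empty for every $s \in \mathbb{T}$. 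For minimality in $\mathcal{F}_\tau$, a non-empty $F \in \mathcal{F}_\tau$ with $F \subset A$ satisfies $F = F \cap \{\tau \leq t\} \in \mathcal{F}_t$ when $t \in \mathbb{T}$ (and lies in $\mathcal{F}_\infty$ when $t = +\infty$), so the minimality of $A$ in $\mathcal{F}_t$ (resp.\ $\mathcal{F}_\infty$) forces $F = A$.

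For the reverse inclusion under $|\mathcal{F}_\infty| < +\infty$, I first observe that $\tau$ takes only finitely many distinct values $t_1 < \cdots < t_n$ on $\Omega$, and a direct computation using $\{\tau < t_i\} = \{\tau \leq t_{i-1}\}$ (with $\{\tau < t_1\} = \emptyset$) yields $\{\tau = t_i\} \in \mathcal{F}_{t_i}$ (read as $\mathcal{F}_\infty$ when $t_i = +\infty$). Given $B \in \mathcal{A}(\mathcal{F}_\tau)$, the decomposition $B = \bigcup_i (B \cap \{\tau = t_i\})$ is a disjoint union of sets each lying in $\mathcal{F}_\tau$ (routine verification of \eqref{stsa}), so the minimality of $B$ in $\mathcal{F}_\tau$ forces $B \subset \{\tau = t_j\}$ for a single index $j$, and hence $B \in \mathcal{F}_{t_j}$. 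To upgrade this to $B \in \mathcal{A}_{t_j}$, I take a non-empty $A \in \mathcal{F}_{t_j}$ with $A \subset B$; the same case analysis as in the first paragraph shows $A \in \mathcal{F}_\tau$, and minimality of $B$ in $\mathcal{F}_\tau$ gives $A = B$. Thus $B \in \mathcal{A}_\tau^{t_j} \subset \mathcal{A}_\tau$.

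The third assertion is then immediate: $\mathcal{F}_\tau \subset \mathcal{F}_\infty$ is finite, so Eq.~\eqref{generation} applied to $\mathcal{F}_\tau$ gives $\mathcal{F}_\tau = \sigma(\mathcal{A}(\mathcal{F}_\tau)) = \sigma(\mathcal{A}_\tau)$. The step I expect to be the main obstacle is the forced confinement of a minimal element $B \in \mathcal{A}(\mathcal{F}_\tau)$ to a single level set $\{\tau = t_j\}$, since it is what links the intrinsically temporal structure of $\mathcal{F}_\tau$ to the level-wise structure built into $\mathcal{A}_\tau$; everything else reduces to a case split on whether the test time $s$ lies below or above the relevant stopping value $t$.
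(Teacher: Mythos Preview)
Your argument is correct. The first assertion and the third are handled essentially as in the paper: membership $A\in\mathcal{F}_\tau$ via the case split $s<t$ versus $s\geq t$, minimality by pulling $F$ down to $\mathcal{F}_t$, and the final identity by invoking \eqref{generation}.

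Where you diverge from the paper is in the reverse inclusion $\mathcal{A}(\mathcal{F}_\tau)\subset\mathcal{A}_\tau$ under $|\mathcal{F}_\infty|<+\infty$. The paper does \emph{not} start from a generic atom of $\mathcal{F}_\tau$; instead it observes that $\mathcal{A}_\tau$ is already a partition of $\Omega$: distinct elements are disjoint (they are atoms of $\mathcal{F}_\tau$), and since $\{\tau=t\}\in\mathcal{F}_t$ one has $\bigcup\mathcal{A}^t_\tau=\{\tau=t\}$, whence $\bigcup\mathcal{A}_\tau=\Omega$. A partition of $\Omega$ by atoms of a $\sigma$-algebra is automatically the full atom set, which finishes the argument in one line. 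Your route---decomposing $B$ over the level sets $\{\tau=t_i\}$, using minimality to confine $B$ to a single level, and then checking that $B$ is an atom of $\mathcal{F}_{t_j}$ by feeding subsets back into $\mathcal{F}_\tau$---is longer but more explicit about the mechanism linking $\mathcal{F}_\tau$-atoms to $\mathcal{F}_t$-atoms. The paper's partition argument is slicker; yours makes the ``confinement to a single level'' step (which you flagged as the crux) visible rather than implicit, and it would adapt more readily to settings where one wants to track an individual atom rather than the whole collection.
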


\begin{proof}
(i) 
$\mathcal{A}_{\tau} \subset \mathcal{F}_{\tau}$: 
Let $A\in \mathcal{A}_{\tau}$, so, for some $s\in\mathbb{T}\cup\{+\infty\}$, $A\in\mathcal{A}_{s}$
and $A\subset\{\tau = s\}$ and therefore $A \cap \{\tau = s\} = A$. 
Assume now $t<s$ for some $t\in\mathbb{T}$. Then 
$A \cap \{\tau \leq t\} = A \cap \{\tau = s\} \cap \{\tau \leq t\} = \emptyset \in \f_t$.
For $t\in\mathbb{T}$, assume now $t \geq s$. Then 
$A \cap \{\tau \leq t\} = A \cap \{\tau = s\} \cap \{\tau \leq t\}
= A \cap \{\tau = s\} = A \in \f_s \subset \f_t$. 
Hence, $A \cap \{\tau \leq t\} \in \f_t$ for all $t\in\mathbb{T}$, and therefore
$\mathcal{A}_{\tau} \subset \mathcal{F}_{\tau}$.
(ii) 
$A\in\mathcal{A}_{\tau}$, $F\in\mathcal{F}_{\tau}$ and $F\subset A$ implies $F=A$:
(a) 
Assume that $A\in\mathcal{A}^{\infty}_{\tau}$ and $F\in\mathcal{F}_{\tau}$ with $F\subset A$.
Clearly, $A\in\mathcal{A}_{\infty}$, implying $F=A$ since $F\in\mathcal{F}_{\infty}$.
(b) 
Assume $A\in\mathcal{A}^{t}_{\tau}$ for some $t\in\mathbb{T}$
and $F\in\mathcal{F}_{\tau}$ with $F\subset A$. Therefore,
$A\in\mathcal{A}_t$ and $F\subset A\subset\{\tau=t\}$, and hence
$F\cap\{\tau=t\}=F$. As $F\in\f_\tau$, one has $F\cap\{\tau\leq t\}\in\f_t$. Since
$\{\tau=t\}\in\f_t$, $F\cap\{\tau\leq t\}\cap\{\tau = t\} = F\cap\{\tau = t\} = F \in\f_t$,
but $A\in\mathcal{A}_t$, and therefore $F=A$.
(i) and (ii) prove the first statement of the theorem. Assume now $|\f_{\infty}|<+\infty$.
$\mathcal{A}_{\tau}$ is then a partition of $\Omega$, because any two distinct sets in $\mathcal{A}_{\tau}$
are disjoint, and, since $\{\tau=t\}\in\f_t$ for $t\in\mathbb{T}\cup\{+\infty\}$, 
one has $\bigcup \mathcal{A}^t_{\tau} = \{\tau = t\}$, so $\bigcup \mathcal{A}_{\tau} = \Omega$.
This proves the second statement. The third statement follows by Eq.~\eqref{generation}.
\end{proof}

The following result is well known.

\begin{proposition}
\label{prop_3}
For $|\f_{\infty}|<+\infty$, $\sigma(\tau) \subset \f_\tau$ and, in general, $\sigma(\tau) \neq \f_\tau$.
\end{proposition}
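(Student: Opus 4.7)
The plan is to prove the containment $\sigma(\tau) \subset \f_\tau$ first, and then exhibit an elementary counterexample showing that equality can fail in general.

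For the inclusion, the cleanest route is to show that $\tau$ is itself $\f_\tau$-measurable, which immediately yields $\sigma(\tau) \subset \f_\tau$. The key step is to verify that $\{\tau \leq s\} \in \f_\tau$ for every $s \in \Tb \cup \{+\infty\}$. Checking Definition~\ref{def1} reduces to the identity
\begin{equation*}
\{\tau \leq s\} \cap \{\tau \leq t\} = \{\tau \leq \min(s,t)\} \in \f_{\min(s,t)} \subset \f_t ,
\end{equation*}
valid for every $t \in \Tb$. Since $|\f_\infty|<+\infty$ forces $\tau$ to take only finitely many values in $\Tb \cup \{+\infty\}$, the $\sigma$-algebra $\sigma(\tau)$ is generated by such sublevel sets and hence lies in $\f_\tau$. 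Alternatively, one could appeal to Theorem~\ref{prop1} and observe that each atom of $\sigma(\tau)$, being of the form $\{\tau = s\}$, is a disjoint union of sets in $\mathcal{A}_\tau$.

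For the strict inequality, the cleanest construction is a constant stopping time on a filtration that grows strictly. Take $\Omega = \{\omega_1,\omega_2\}$, $\Tb = \{0,1\}$, $\f_0 = \{\emptyset,\Omega\}$, $\f_1 = \f_\infty = 2^\Omega$, and $\tau \equiv 1$. Then $\sigma(\tau) = \{\emptyset,\Omega\}$ is trivial, whereas a direct check of Definition~\ref{def1} gives $\f_\tau = \f_1 = 2^\Omega$: for every $F \in \f_1$ one has $F\cap\{\tau\leq 0\} = \emptyset \in \f_0$ and $F\cap\{\tau\leq 1\} = F \in \f_1$. Thus $\sigma(\tau) \subsetneq \f_\tau$ in this example, which proves the second claim.

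Nothing here is really difficult; the only subtlety worth flagging is keeping the two indices $s$ (the level in $\{\tau \leq s\}$) and $t$ (the index used to test the $\f_\tau$-condition) clearly separated when verifying membership in $\f_\tau$, and ensuring the counterexample stays inside the finite-$\f_\infty$ hypothesis so that both assertions of the proposition are genuinely exercised.
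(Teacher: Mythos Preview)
Your proof is correct. The paper argues the inclusion by noting that the atoms of $\sigma(\tau)$ are the non-empty level sets $\{\tau=t\}$ and that these lie in $\f_\tau$ (stated as ``well known and straightforward''); your primary route via the sublevel sets $\{\tau\leq s\}$ and the identity $\{\tau\leq s\}\cap\{\tau\leq t\}=\{\tau\leq\min(s,t)\}$ is a minor variant that has the virtue of being fully written out, and your alternative via Theorem~\ref{prop1} is exactly the paper's line. The more noticeable difference is the counterexample: the paper defers to its running eight-point tree example, whereas your two-point space with a constant stopping time is the minimal instance and is self-contained within the proof; this buys clarity at no cost, since the finiteness hypothesis is trivially satisfied.
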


\begin{proof}
$\{\tau=t\}$ $(t\in\mathbb{T})$ and $\{\tau=+\infty\}$ are the minimal elements of $\sigma(\tau)$
if they are non-empty, but it is well known and straightforward to see that these sets are elements
of $\f_\tau$, too. Therefore, $\sigma(\tau) \subset \f_\tau$.
It is an easy exercise to find examples where $\sigma(\tau) \neq \f_\tau$ (see example below).
\end{proof}

We can interpret the filtered probability space $(\Omega, \f_\infty, (\f_t)_{t\in\mathbb{T}}, \p)$
as a random `experiment' or `experience' that develops over time.

The common interpretation of the filtration $(\f_t)_{t\in\mathbb{T}}$ is that if it is possible
to repeat the experiment arbitrarily often until time $t\in\mathbb{T}$, the maximum information
that can be obtained about the experiment is $\f_t$. Hence, $\f_t$ represents 
(potentially) available information up to time $t$.

Using Theorem \ref{prop1} for $|\f_{\infty}|<+\infty$, 
we can interpret the stopping time $\sigma$-algebra $\F_\tau$ as the maximum information
that can be obtained from repeatedly carrying out the experiment up to the random time $\tau$.
This is straightforward from the definition of the $\mathcal{A}^{t}_{\tau}$ in \eqref{a1}.
While this interpretation is, of course, generally known, minimal sets are usually not used
to derive it. However, the example below will illustrate that this is a very natural
way of interpreting stopping time $\sigma$-algebras.

For an intuitive interpretation and representation of stopping times see Fischer (2011).\\


{\bf Example.} 
In Figure \ref{fig:example}, we see the usual interpretation of a discrete
time finite space filtration as a stochastic tree. 
In such a setting, the set of paths of maximal length represents $\Omega$.
In this case, $\Omega = \{\omega_1,\ldots,\omega_8\}$.
A path up to some node at time $t$ (here $\mathbb{T}=\{0,1,2,3\}$
and $\f_3 = \f_{\infty} = \mathcal{P}(\Omega)$) 
represents the set of those paths of full length that have this path up to time $t$ in common.
The paths up to time $t$ represent the minimal elements (atoms) $\mathcal{A}_t$ of $\f_t$.
For instance, in the example of Fig.~\ref{fig:example}, 
\begin{equation}
\mathcal{A}_1 = \{\{\omega_1,\omega_2\},\{\omega_3,\omega_4\},\{\omega_5,\ldots,\omega_8\}\} .
\end{equation}
A stopping time $\tau$ (see Fig.~\ref{fig:example}) and the corresponding stopping process
$X^{\tau}$ (see Fischer (2011)) are given ($X^{\tau}$ is given by the values of the nodes of
the tree). The intuitive interpretation of the stopping time as the random time
at which $X^{\tau}$ jumps to (`hits') $0$ becomes clear (see boxed zeros). Furthermore, the paths
up to those boxed zeros represent the minimal elements $\mathcal{A}_\tau$ of the stopping time
$\sigma$-algebra $\F_\tau$. This follows of course from the fact that for any 
$A\in\mathcal{A}^{t}_{\tau}$ one has $A\subset\{\tau=t\}$ by \eqref{a1} and therefore
$X^{\tau}_t(A)=0$, but $X^{\tau}_s(A)=1$ for $s<t$. So, in a tree example such as the given one,
the `frontier of first zeros' (or, preciser, the paths leading up to it)
describes the stopping time $\sigma$-algebra (as well as the stopping time itself).
Therefore, it becomes very obvious in what sense $\f_{\tau}$ contains the information
in the system that can be explored up to time $\tau$. 
In the case of the example,
\begin{equation}
\mathcal{A}_\tau 
= 
\{\{\omega_1,\omega_2\},\{\omega_3\},\{\omega_4\},\{\omega_5,\omega_6\},\{\omega_7\},\{\omega_8\}\} .
\end{equation}
It is also easy to see that in this case the minimal elements of $\sigma(\tau)$,
denoted by $\mathcal{A}(\sigma(\tau))$, are given by
\begin{equation}
\mathcal{A}(\sigma(\tau)) 
= 
\{\{\omega_1,\omega_2\},\{\omega_5,\omega_6\},\{\omega_3,\omega_4,\omega_7,\omega_8\}\} .
\end{equation}
Therefore, $\mathcal{A}_\tau \neq \mathcal{A}(\sigma(\tau))$ and, hence, $\sigma(\tau) \neq \f_\tau$,
as stated in Prop.~\ref{prop_3}.



\begin{figure}[b]
$$
\xymatrix @-1pc {
& & & & & & & &\tau& \Omega \\
& & & & &*++[o][F--]{0}\ar@{--}[rr]& &*++[o][F--]{0}&1& \omega_1 \\
& & &*++[F-]{0}\ar@{--}[rr]\ar@{--}[urr]  & &*++[o][F--]{0}\ar@{--}[rr]& &*++[o][F--]{0}&1& \omega_2 \\
& & & & & & &*++[F-]{0}&3& \omega_3 \\
& & &*++[o][F-]{1}\ar@{-}[rr] & &*++[o][F-]{1}\ar@{-}[urr]\ar@{-}[rr]& &*++[F-]{0}&3& \omega_4 \\
X^{\tau} 
&*++[o][F-]{1}\ar@{-}[drr]\ar@{-}[urr]\ar@{-}[uuurr]& & & & & & & & \\
& & &*++[o][F-]{1}\ar@{-}[rr]\ar@{-}[ddrr] & &*++[F-]{0}\ar@{--}[drr]\ar@{--}[rr]& &*++[o][F--]{0}&2& \omega_5 \\
& & & & & & &*++[o][F--]{0}&2& \omega_6 \\
& & & & &*++[o][F-]{1}\ar@{-}[drr]\ar@{-}[rr]& &*++[F-]{0}&3& \omega_7 \\
& & & & & & &*++[F-]{0}&3& \omega_8 \\
\ar[rrrrrrrrr] &|& &|& &|& &|& & \\
&0& &1& &2& &3& &t  \\
}
$$
\caption{\label{fig:example} Example.}
\end{figure}



\begin{thebibliography}{M}
\bibitem{F}
Fischer, T. (2011):
Stopping times are hitting times: a natural representation.
arXiv:1112.1603v3 [math.PR]. 
\end{thebibliography}
\end{document}